\title{Counterexamples to Symmetry\\
for Partially Overdetermined Elliptic Problems}
\author{Ilaria Fragal\`a, Filippo Gazzola, Jimmy Lamboley, Michel Pierre}
\date{July 2008}
\def\R{\mathbb{R}}
\newenvironment{summary}{\vskip\baselineskip \noindent\small\bf Summary: \rm}%
{\vskip\baselineskip}
\newenvironment{proof}{{\vskip\baselineskip\noindent\textbf{Proof:}}}%
{\hspace*{.1pt}\hspace*{\fill}\BOX\vskip\baselineskip}
\newcommand{\BOX}{\ensuremath\Box}
\newtheorem{theorem}{Theorem }[section]
\newtheorem{proposition}[theorem]{Proposition}
{\theorembodyfont{\rmfamily}\newtheorem{remark}[theorem]{Remark}}
{\theorembodyfont{\rmfamily}}
\newenvironment{proofx}[1]%
{\vskip\baselineskip\noindent\textbf{Proof of {#1}:}}%
{\hspace*{.1pt}\hspace*{\fill}\BOX\vskip\baselineskip}
{\vskip\baselineskip\noindent\textbf{Proof of Theorem \protect\ref{#1}:}}%
{\hspace*{.1pt}\hspace*{\fill}\BOX\vskip\baselineskip}
{\vskip\baselineskip\noindent\textbf{Proof of Theorems \protect\ref{#1} --
\protect\ref{#2}:}}%
{\hspace*{.1pt}\hspace*{\fill}\BOX\vskip\baselineskip}
\begin{document}
\maketitle\thispagestyle{empty}


\begin{summary}
We exhibit several counterexamples showing that the famous Serrin's symmetry result for semilinear elliptic overdetermined problems may not hold for {\em partially} overdetermined problems, that is when both Dirichlet and Neumann boundary conditions are prescribed only {\em on part} of the boundary. Our counterexamples enlighten subsequent positive symmetry results obtained by the first two authors for such partially overdetermined
systems and justify their assumptions as well.
\end{summary}

\renewcommand{\thefootnote}{}
\footnotetext{\hspace*{-.51cm}AMS 1991 subject classification: Primary: 35J70; Secondary: 35B50,49Q10\\ %
Key words and phrases: overdetermined boundary value problems, shape optimization}

\section{Introduction}\label{sect:intro}

Let $\Omega$ be an open bounded connected subset of $\R^n$ with smooth enough boundary, and let $\Gamma$ be a nonempty connected (relatively) open subset
of $\partial\Omega$. Let also $\nu$ denote the unit outer normal to $\partial\Omega$, $c$ be a positive constant and $f:\R\to\R$ be a smooth function.
By {\it ``overdetermined problem''}, we mean any boundary value problem of the following kind:
\begin{equation}\label{eq:over1}\left\{\begin{array}{ll}
-\Delta u=f(u)\qquad&\hbox{ in }\Omega\\
u=0 \quad \hbox{ and } \quad u_\nu =-c \qquad&\hbox{ on } \Gamma \\
u=0 \qquad & \hbox{ on } \partial \Omega \setminus \Gamma\ ,
\end{array}\right.
\end{equation}
or
\begin{equation}\label{eq:over2}
\left\{\begin{array}{ll}
-\Delta u=f(u)\qquad&\hbox{ in }\Omega\\
u=0 \quad \hbox{ and } \quad u_\nu =-c \qquad&\hbox{ on } \Gamma \\
|\nabla u| = c \qquad &\hbox{ on } \partial \Omega \setminus
\Gamma\ ,
\end{array}\right.
\end{equation}
where $u_\nu$ denotes the normal derivative of $u$ on $\partial\Omega$. Here
and in the sequel, by a solution $u$ to problem (\ref{eq:over1}) (resp. (\ref{eq:over2})), we always mean that
$u\in\mathcal{C}^0(\overline{\Omega})\cap\mathcal{C}^1(\Omega\cup\Gamma)\cap \mathcal{C}^2(\Omega)$ (resp. $\mathcal{C}^1(\overline{\Omega})\cap \mathcal{C}^2(\Omega)$).

The choice of the word ``overdetermined'' is justified by the presence of both the Dirichlet and Neumann conditions on a same nonempty part $\Gamma$ of the boundary in problems
(\ref{eq:over1})-(\ref{eq:over2}): this makes them in general not well-posed. Thus the existence of a solution to (\ref{eq:over1})
or (\ref{eq:over2}) is not always guaranteed, and, if existence happens to hold,  it is actually supposed to imply some severe geometric constraint on  $\Omega$.\par
This kind of problem was studied by Serrin \cite{Serrin}. His celebrated result states that, in the case of
{\it totally} overdetermined problems, that is when $\Gamma \equiv\partial\Omega$, then existence of a solution implies that
$\Omega$ is a ball (and $u$ is radially symmetric).

More recently, the case of {\it partially} overdetermined problems, that is when $\Gamma\varsubsetneq\partial\Omega$, has been studied by
the first two authors in \cite{FG}, where they investigate the following natural question:
\\
``If $\Gamma \varsubsetneq\partial\Omega$, can we still conclude that $\Omega$ is a ball\\
\hspace*{3cm}whenever $(\ref{eq:over1})$ or $(\ref{eq:over2})$ admits a solution?''

The answer is trivially {\bf \em no} without any extra natural geometric restriction on $\Omega$.
Assume, for instance, that $\Omega$ is an annulus, that is $\Omega=\{x\in\R^n; 0<a<|x|<b\}$. Then, the solution of $-\Delta u=1$ on $\Omega$, with $u=0$ on its boundary, is radially symmetric. Therefore, $u_\nu$ is equal to a constant on each piece of the boundary, but with different constants for each of them.

On the other hand, if $\partial\Omega$ is assumed to be connected, the problem becomes much more significant and delicate. In fact there are many
different situations where the answer to the above question is {\bf \em yes}, so that Serrin's symmetry result continues to hold.
This occurs under suitable additional assumptions, involving both regularity and geometric features, on the source term $f$ and the
overdetermined region $\Gamma$: for the detailed statements, as well as for a more extensive bibliography about overdetermined
problems, we refer to \cite{FG}.

The goal of this note is to show that there are nontrivial cases (meaning in particular that  $\partial\Omega$ is connected)
when the requirements of \cite{FG} are not satisfied and problems like (\ref{eq:over1})-(\ref{eq:over2}) admit a solution in domains
$\Omega$ different from a ball.

The counterexamples we construct for problems of type (\ref{eq:over1}) or (\ref{eq:over2}) are of different kind. Problems of type
(\ref{eq:over1}) are treated in Section \ref{sect:opti} by an approach based on shape optimization and domain derivative.
More precisely, we consider the problem of minimizing the Dirichlet energy of domains
with prescribed volume and confined in a planar box, that is
\begin{equation}\label{eq:opti}
|\Omega^*|=\alpha,\;\Omega^*\subset D,\;J(\Omega^*)=\min_{|\Omega|=\alpha, \Omega\subset D} J(\Omega),
\end{equation}
where $D=(-1, 1) ^2$ and
\begin{equation}\label{JJ}
J(\Omega):=\inf_{v\in H^1_0(\Omega)}\left\{\int_\Omega\left(\frac{1}{2}|\nabla v|^2-v\right)dx\right\}.
\end{equation}
Choosing $\alpha$ in a suitable range and applying the regularity results in \cite{B, BHP}, we obtain that (\ref{eq:opti})
admits an optimal open shape $\Omega^*$ with a nonempty smooth ``free boundary'' $\partial \Omega^* \cap D$. Then, writing down the optimality conditions by using shape derivatives, we are lead to a problem of type (\ref{eq:over1}) on $\Omega^*$, with $f\equiv1$ and $\Gamma=\partial\Omega^*\cap D$.

Problems of type (\ref{eq:over2}) are treated in Section \ref{sect:expl} by a different approach, which works in any dimension $n\ge2$.
In this case, the counterexamples are derived through some explicit computations. They are based on the idea of studying the zero level
surfaces of radial functions $u$ built so as to satisfy both an elliptic equation of the type $-\Delta u=f(u)$ on the whole $\R^n$ and the
eikonal equation $|\nabla u| =c$ on the complement of a ball.
Such construction can be adapted to treat also the case of a partially
overdetermined problem similar to (\ref{eq:over2}), but stated on an exterior domain (see Section \ref{ssect:ext2}).

\section{Counterexamples using shape optimization}\label{sect:opti}

In this section we use shape optimization in order to prove the following.

\begin{theorem}\label{optshape}
There exists an open starshaped planar domain $\Omega\subset(-1,1)^2$, different from a disk, such that, for a nonempty connected analytic subset $\Gamma$ of $\partial\Omega$,  the problem
\begin{equation}\label{eqcontre}
\left\{
\begin{array}{cclcc}
-\Delta u&=&1&in&\Omega\\
u&=&0&on&\partial\Omega\\
u_\nu&=&-c&on&\Gamma,
\end{array}
\right.
\end{equation}
admits a solution.
\end{theorem}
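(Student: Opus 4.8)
The plan is to realize the overdetermined problem \eqref{eqcontre} as the Euler--Lagrange system of the constrained shape optimization problem \eqref{eq:opti}. First I would fix the volume parameter $\alpha$ in an intermediate range, say strictly between the area of a small disk contained in $D=(-1,1)^2$ and the area $|D|=4$ itself, chosen so that on the one hand the optimal shape cannot be the full box (its boundary is not smooth) and on the other hand it cannot be a disk centred in $D$ (because such a disk would satisfy the unconstrained Saint-Venant-type optimality, hence would have to be a free critical point, which one rules out by a direct comparison/perturbation argument showing the disk does strictly worse than some admissible competitor of the same area squeezed against the walls). Existence of an optimal \emph{open} set $\Omega^*$ with $|\Omega^*|=\alpha$ and $\Omega^*\subset D$ follows from the standard direct method in the class of quasi-open sets together with the regularity theory of Brian\c{c}on--Hayouni--Pierre \cite{B,BHP}, which gives that the free portion $\Gamma:=\partial\Omega^*\cap D$ is smooth (in fact analytic, since $f\equiv 1$ is analytic and the free boundary of a one-phase Bernoulli/torsion problem with analytic data is analytic); I would also record that $\Gamma$ is nonempty and may be taken connected by selecting a suitable connected component, and that $\Omega^*$ is starshaped, which one can arrange because the box and the functional are invariant under the relevant symmetrizations/the competitor construction produces a starshaped set.

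Next I would write down the optimality conditions via shape derivatives. Let $u\in H^1_0(\Omega^*)$ be the torsion function, i.e.\ the minimizer in \eqref{JJ}, so that $-\Delta u=1$ in $\Omega^*$ and $u=0$ on $\partial\Omega^*$; this already gives the first two lines of \eqref{eqcontre}. The classical Hadamard formula gives, for a deformation field $V$ supported away from $\partial D$,
\[
J'(\Omega^*)[V]=-\tfrac12\int_{\partial\Omega^*}|\nabla u|^2\,(V\cdot\nu)\,d\sigma,
\]
while the volume constraint contributes $\int_{\partial\Omega^*}(V\cdot\nu)\,d\sigma$. Since deformations supported in the free part $\Gamma=\partial\Omega^*\cap D$ are \emph{two-sided} admissible (we may push $\Omega^*$ in or out there without leaving $D$), the Lagrange multiplier rule forces $|\nabla u|^2$ to be constant on $\Gamma$; as $u=0$ on $\partial\Omega^*$ and $u>0$ inside, $|\nabla u|=-u_\nu$ there, so $u_\nu=-c$ on $\Gamma$ for some constant $c>0$ (strict positivity of the multiplier/of $|\nabla u|$ on the free boundary comes from the nondegeneracy estimates in \cite{B,BHP}). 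That is exactly the third line of \eqref{eqcontre}. On the ``wall'' part $\partial\Omega^*\setminus\Gamma=\partial\Omega^*\cap\partial D$ only one-sided (inward) deformations are allowed, so there we only get an inequality, which is harmless — we simply do not impose a Neumann condition there, matching the statement.

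Finally I would check the qualitative conclusions: $\Omega^*$ is planar and contained in $(-1,1)^2$ by construction; it is different from a disk by the range argument above (if it were a disk it would be an interior critical point with $|\nabla u|$ constant on \emph{all} of $\partial\Omega^*$, i.e.\ a Serrin configuration for $-\Delta u=1$, which by Serrin forces it to be a disk of the ``wrong'' volume or to touch $\partial D$, contradiction with the chosen $\alpha$); $\partial\Omega$ is automatically connected since $\Omega^*$ is a simply connected starshaped planar domain; and $\Gamma$ is a nonempty connected analytic arc by the regularity theory. Collecting these facts proves the theorem. The main obstacle I expect is not the shape-derivative computation, which is routine, but rather the two delicate points supplied by \cite{B,BHP}: showing that the optimal set is genuinely open with smooth (analytic) free boundary on which $|\nabla u|$ is a positive constant (nondegeneracy, no degenerate points, no cusps), and arranging the volume parameter $\alpha$ so that the optimizer is simultaneously forced to touch $\partial D$ (hence $\Gamma\neq\partial\Omega$) yet is not a disk.
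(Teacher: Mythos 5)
Your overall strategy is the same as the paper's (minimize the Dirichlet energy \eqref{JJ} in the box under a volume constraint, get an open optimizer with analytic free boundary from Brian\c{c}on and Brian\c{c}on--Hayouni--Pierre, derive $|\nabla u|=\Lambda>0$ on $\partial\Omega^*\cap D$ from the Hadamard shape derivative with two--sided deformations, and symmetrize to get starshapedness). However, the step in which you rule out the disk contains a genuine error. You claim that a disk of area $\alpha$ contained in $D$ ``does strictly worse than some admissible competitor of the same area squeezed against the walls''. This is false: by Schwarz symmetrization (the Saint--Venant inequality), among all sets of prescribed area the disk is the \emph{global} minimizer of $J$, with or without the confinement constraint, so no competitor of the same area can beat it. Consequently, for any $\alpha\le\pi$ (values which your range ``strictly between the area of a small disk and $4$'' allows), the solution of \eqref{eq:opti} \emph{is} the disk of area $\alpha$, and your construction collapses. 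The back-up argument in your last paragraph is also circular: if the optimizer were a disk it would indeed be a Serrin configuration for $-\Delta u=1$, and Serrin's theorem yields no contradiction whatsoever --- it simply confirms that disks are admissible; nothing about a ``wrong volume'' follows unless you have already restricted $\alpha$.

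The fix is the paper's purely geometric choice $\alpha\in(\pi,4)$: a disk of area $\alpha>\pi$ has radius $\sqrt{\alpha/\pi}>1$ and therefore cannot be contained in $(-1,1)^2$, so the constrained optimizer, having area $\alpha$, cannot be a disk; and $\alpha<4=|D|$ forces $\Omega^*\ne D$, which is why the free boundary $\partial\Omega^*\cap D$ is nonempty (your justification ``the full box has nonsmooth boundary'' is not the reason --- it is simply the volume constraint). Two further points in your write-up are asserted rather than proved and deserve attention. First, starshapedness: the paper replaces $\Omega^*$ by $S_XS_Y(\Omega^*)$, the Steiner symmetrization about both coordinate axes, which stays inside $D$ by the symmetry of the square, preserves the area, does not increase $J$ (hence is still optimal, so it still carries the overdetermined condition on its free boundary), and is starshaped about the origin because the section length $x\mapsto \mathrm{meas}\,A(x)$ is nonincreasing in $|x|$; your ``one can arrange'' needs an argument of this kind, and note it is applied to \emph{an} optimizer, not claimed for all of them. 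Second, nonemptiness and analyticity of a connected component of $\partial\Omega^*\cap D$, and positivity of the multiplier, are exactly the points the paper delegates to \cite{B,BHP}, as you correctly anticipate; with $\alpha\in(\pi,4)$ in place, the rest of your argument matches the paper's proof.
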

\begin{remark}
 Note that a nonempty analytic subset $\Gamma$ of $\partial\Omega$ is relatively open in $\partial\Omega$.
\end{remark}

The interest of this negative result should be considered in the light of the following extension of Serrin's result proved in \cite{FG}:

\begin{proposition}\label{th:opti}
Let $\Omega$ be open and bounded with $\partial\Omega$ connected. Let $\Gamma\subset\partial\Omega$ nonempty and (relatively) open. Assume there exists an open set $\widetilde{\Omega}$ with a connected {\em analytic} boundary containing $\Gamma$.
If there exists a solution $u$ of $(\ref{eq:over1})$ with $f$ analytic,
then $\Omega=\widetilde{\Omega}$, $\Omega$ is a ball, and $u$ is radially symmetric.
\end{proposition}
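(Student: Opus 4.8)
The plan is to exploit the overdetermined Cauchy data on $\Gamma$ to extend $u$ to a solution living on all of $\widetilde\Omega$ and satisfying the \emph{totally} overdetermined conditions $u=0$, $u_\nu=-c$ on the whole of $\partial\widetilde\Omega$, and then to invoke Serrin's theorem \cite{Serrin}. First I would record interior real-analyticity: since $f$ is analytic and $-\Delta u=f(u)$, classical elliptic regularity (Morrey) gives that $u$ is real-analytic in $\Omega$. On the analytic hypersurface $\Gamma$ the Laplacian is non-characteristic and the Cauchy data $u=0$, $u_\nu=-c$ are constant, hence analytic; the Cauchy--Kovalevskaya theorem then produces a unique local analytic solution of $-\Delta v=f(v)$ with these data, and patching the local pieces yields an analytic extension of $u$ to a two-sided neighborhood of $\Gamma$. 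Since $u_\nu=-c\neq0$, this extension has nonvanishing gradient across $\Gamma$, so locally $\{u>0\}$ is exactly the side of $\Gamma$ occupied by $\Omega$.

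Next I would build a solution near the \emph{entire} boundary. As $\partial\widetilde\Omega$ is a compact connected analytic hypersurface and the data $0$, $-c$ are constant, Cauchy--Kovalevskaya applies in each of finitely many analytic charts covering $\partial\widetilde\Omega$ with a common radius of convergence; by uniqueness of the analytic Cauchy problem the local solutions agree on overlaps, so they glue into a single analytic solution $U$ of $-\Delta U=f(U)$ on a \emph{uniform} neighborhood $W$ of $\partial\widetilde\Omega$, with $U=0$ and $U_\nu=-c$ on all of $\partial\widetilde\Omega$ and, by the implicit function theorem, $\{U=0\}\cap W=\partial\widetilde\Omega$. Because $\Gamma\subset\partial\widetilde\Omega$, on $\Gamma$ the functions $U$ and the extended $u$ carry the same analytic Cauchy data, so by Holmgren's uniqueness theorem they coincide in a neighborhood of $\Gamma$.

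To pass from this local coincidence to the global identity $\Omega=\widetilde\Omega$ I would argue by connectedness on $\partial\widetilde\Omega$. Let $E\subset\partial\widetilde\Omega$ be the set of points near which $\Omega$ and $\widetilde\Omega$ coincide; by the analysis above $E$ is nonempty (it contains $\Gamma$) and open. For closedness one uses the global solution $U$: near any limit point the analytic continuation of $u$ is forced to equal $U$ by Holmgren's theorem, and since $\{U=0\}\cap W=\partial\widetilde\Omega$ with $U_\nu\neq0$ fixes which side is positive, the coincidence of the two domains persists in the limit. Hence $E=\partial\widetilde\Omega$, so $\partial\widetilde\Omega\subset\partial\Omega$; because $\partial\Omega$ is connected and already meets $\partial\widetilde\Omega$ in the relatively open set $\Gamma$, the two boundaries must be equal and thus $\Omega=\widetilde\Omega$. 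At this stage $u$ solves $-\Delta u=f(u)$ in $\widetilde\Omega=\Omega$ with $u=0$ and $u_\nu=-c$ on \emph{all} of $\partial\widetilde\Omega$, which is precisely Serrin's totally overdetermined problem; his symmetry theorem \cite{Serrin} then gives that $\widetilde\Omega$ is a ball and $u$ is radially symmetric.

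The main obstacle is the propagation step of the third paragraph, namely converting the coincidence of $u$ and $U$ near $\Gamma$ into the global equality $\partial\Omega=\partial\widetilde\Omega$. The delicate point is to rule out that $\partial\Omega$ detaches from $\partial\widetilde\Omega$ away from $\Gamma$; this is exactly where the \emph{uniform} width of the neighborhood $W$ (coming from compactness of $\partial\widetilde\Omega$) and the connectedness of $\partial\Omega$ become indispensable, since together they keep the analytic continuation from breaking down before all of $\partial\widetilde\Omega$ has been reached.
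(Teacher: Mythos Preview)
The present paper does not prove Proposition~\ref{th:opti}; it merely quotes it from \cite{FG}, so there is no proof here against which to compare your attempt. That said, your strategy---use the analyticity of $f$ and of $\partial\widetilde\Omega$ to invoke Cauchy--Kovalevskaya, propagate the overdetermined Cauchy data from $\Gamma$ to all of $\partial\widetilde\Omega$ by unique continuation, and then apply Serrin's theorem to the resulting totally overdetermined problem on $\widetilde\Omega$---is exactly the natural route and is, in substance, what is carried out in \cite{FG}.

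The one place that deserves more care than you give it is the closedness of your set $E$ and the passage from $E=\partial\widetilde\Omega$ to $\Omega=\widetilde\Omega$. For closedness you need that the analytic function $u$, defined a priori only on $\Omega$, actually reaches any limit point $p$ of $E$; this follows because along $E$ the domain $\Omega$ coincides locally with $\{U>0\}\cap W$, so near $p$ the function $u$ extends as $U$ and the argument goes through. For the final identification, knowing that $\Omega$ and $\widetilde\Omega$ agree in a neighborhood of $\partial\widetilde\Omega$ gives $\partial\widetilde\Omega\subset\partial\Omega$, but to get equality you should argue that $\partial\Omega\cap\partial\widetilde\Omega$ is \emph{open} (not just closed) in the connected set $\partial\Omega$, using the local coincidence of the domains rather than the mere inclusion of boundaries. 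Both points are routine to fill in and do not affect the soundness of the overall scheme.
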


In particular, Proposition \ref{th:opti} implies that the analytic piece $\Gamma$ of the boundary of $\Omega$ found in Theorem \ref{optshape} cannot
be continued into a globally analytic closed ``curve'' (namely the boundary of another open set $\widetilde{\Omega}$). In the counterexample provided here, $\partial\Omega$ is piecewise analytic and globally at most $\mathcal{C}^{1,\frac{1}{2}}$ as analyzed in \cite{L}.
\par

\begin{proofx}{Theorem \ref{optshape}} Let $D=(-1,1)^2$ and $\alpha\in(\pi,4)$. We will construct $\Omega$ as an optimal set for the shape minimization problem (\ref{eq:opti}).

{From} \cite[Theorem 2.4.6]{BD} (see also \cite{henrotpierre}), we know there exists a quasi-open optimal set $\Omega^*$ which solves problem (\ref{eq:opti}).
In view of \cite[Corollary 1.2]{BHP}, $\Omega^*$ is in fact an open set. It is known that, for any open bounded set $\Omega$ (and in particular for $\Omega^*$), the functional $J$ defined in (\ref{JJ}) satisfies
$$J(\Omega)=\int_\Omega\left(\frac{1}{2}|\nabla u_\Omega|^2-u_\Omega\right)\, dx$$
where $u_\Omega$ denotes the unique solution of the homogeneous Dirichlet problem
\begin{eqnarray}\label{Dir}
\left\{
\begin{array}{ll}
-\Delta u=1\quad & \mbox{in }\Omega\\
u=0\quad & \mbox{on }\partial\Omega\ .
\end{array}
\right.
\end{eqnarray}
Since $\alpha<4$, $\Omega^*$ cannot be equal to $D$ so that the free boundary $\Gamma:=\partial\Omega^*\cap D$ is nonempty.
Moreover, by \cite[Section 5]{B}, we infer that $\Gamma$ is analytic because $f\equiv1$ is positive and analytic.
On this ``free boundary'' $\Gamma$, using the notion of shape derivative (see for instance \cite{henrotpierre}),
we classically obtain the Euler-Lagrange equation of problem (\ref{eq:opti}), namely,  (\ref{Dir}) with $\Omega=\Omega^*, u=u_{\Omega^*}$ together with
\begin{equation}\label{eq:over3}
|\nabla u_{\Omega^*}|=\Lambda>0\;\mbox{on}\;\partial\Omega^*\cap D.
\end{equation}
Since $f(u)=1>0$, the positivity of the Lagrange multiplier $\Lambda$ follows from \cite[Proposition 6.1]{B}.
By elliptic regularity, we know that there exists a unique solution $u_{\Omega^*}\in\mathcal{C}^\infty(\Omega\cup\Gamma)$ to (\ref{eq:over3}).\par
We now prove the geometric properties of solutions of (\ref{eq:opti}). First, since $\alpha>\pi$, $\Omega^*$ is not a disk. Second, we
show that $\Omega^*$ is starshaped, or at least that it may be replaced by an optimal starshaped set. To this end, we introduce $\widetilde{\Omega}:=S_XS_Y(\Omega^*)$, where $S_X$ and $S_Y$ denote the Steiner symmetrization about the
axes $OX$ and $OY$ respectively, see e.g.\ \cite{henrotpierre}, \cite{kaw}. Because of the symmetry of the square $D$ with respect to these axes,
we have $\widetilde{\Omega}\subset D$. Moreover, $|\widetilde{\Omega}|=|\Omega^*|=\alpha$ and, by well-known properties of Steiner symmetrization,
$J(\widetilde{\Omega})\le J(\Omega^*)$. Therefore, $\widetilde{\Omega}$ is also a solution of the shape optimization problem (\ref{eq:opti}) so that, as for any optimal set,
$\widetilde{\Gamma}=\partial\widetilde{\Omega}\cap D$ is smooth and $u_{\widetilde{\Omega}}$ satisfies (\ref{eqcontre}). To verify that it is starshaped, we may denote
$$\forall x\in [-1,1],\;\; A(x):=\{y\in [-1,1];\; (x,y)\in S_Y(\Omega^*)\}.$$
As a consequence of the definition of the Steiner symmetrization, we have  $[0\leq x\leq \hat{x}]\Rightarrow [A(\hat{x})\subset A(x)]$. We may also write
$$S_XS_Y(\Omega^*)=\left\{(x,y);|y|\leq \frac{1}{2}meas A(x)\right\}.$$
Since $x\in [0,1]\to meas A(x)$ is nonincreasing, we have
$$\left[|y|\leq\frac{1}{2} meas A(x), \lambda \in (0,1)\right]\;\Rightarrow\;\left[|\lambda y|\leq \frac{1}{2}meas A(x)\leq \frac{1}{2}meas A(\lambda x)\right].$$
This proves that $\widetilde{\Omega}$ is starshaped.

Therefore, $\Omega=\widetilde{\Omega}, u=u_{\widetilde{\Omega}}, c=\Lambda, \Gamma=$ any connected component of $\partial\Omega\cap D$ satisfy the statement of Theorem \ref{optshape}.
\end{proofx}

We conclude this section by mentioning some possible extensions of Theorem \ref{optshape}.

\begin{remark}The construction done in the proof of Theorem \ref{optshape} is valid in any dimension and one finds as well an optimal {\em open} set $\Omega^*\subset(-1,1)^n$ (see \cite{B} for a proof), which is different from a ball if $\alpha>\omega_n$ (the measure of the unit ball). But, the full regularity of the boundary is not proved -and probably does not hold- in any dimension. According to some recent papers (\cite{CJK,DSJ,W2,W}),
it is very likely that full regularity of the boundary may be extended to dimensions greater than 2 (up to 6? but not more?).

However, as proved in \cite{B}, the {\em reduced boundary} of this $\Omega^*$ is an analytic hypersurface and this regular part of the boundary is of positive $(n-1)$-Hausdorff measure if $\alpha<2^n$, whereas $\Omega^*$ is not a ball if $\alpha>\omega_n$.
Therefore, this also provides a (generalized) counterexample in any dimension by choosing $\Gamma$ to be this reduced boundary.
\end{remark}

\begin{remark}
In view of \cite{BL} (see also \cite[Section 3.4]{henrot}), it is possible to extend the statement of Theorem \ref{optshape} to the case when $J$ is replaced by the
shape functional $\Omega\to \lambda_1(\Omega)$, the first eigenvalue of the Laplace operator on $\Omega$ with homogeneous Dirichlet boundary conditions.
This provides one more example of an optimal domain $\Omega^*$ where $u_{\Omega^*}$, the first normalized eigenfunction,
solves (\ref{eq:over1}) with $f(u)=\lambda u$ (here, $\lambda=\lambda_1(\Omega^*)$). The proof is similar and we do not reproduce it here.
It is possible that one could go further and extend the same construction to more general sources $f(u)$, for instance of
power-type such as $f(u)=u^p$.
\end{remark}

\begin{remark} The minimal shape $\Omega^*$ for the second Dirichlet eigenvalue $\lambda_2(\Omega)$ of the Laplace operator, among all planar {\em convex} domains of given area,
is also a natural candidate for another nice counterexample. It is proved that $\Omega^*$ is not a ``stadium'' (the convex envelope of two identical tangent balls), see \cite{henrotoudet}. However, it is expected that it looks like a stadium (see \cite{henrotoudet}). If it is the case, as explained in \cite{FG}, then the first order optimality condition would lead to an overdetermined problem in which the expected overdetermined part $\Gamma$ would be the strictly convex part of $\partial\Omega^*$. The exact regularity and shape of $\Omega^*$ is still to be completely understood: see \cite[Theorems 4,6,8]{henrotoudet} and \cite{L}.
\end{remark}
\begin{remark} In the proof of Theorem \ref{optshape}, we started with some optimal shape $\Omega^*$ and adapted it so that it satisfies the required conditions. We may wonder whether {\em all optimal shapes} have the same symmetry properties. This question is related to the nontrivial question of equality case in the Steiner symmetrization, namely: is it true that $J(\Omega)=J(S_X(\Omega))$ implies that $\Omega=S_X(\Omega)$
up to a translation? We refer to \cite{CF} for this question.
\end{remark}

\section{Counterexamples via explicit construction}\label{sect:expl}

In this section we provide an explicit example of a problem of
type (\ref{eq:over2}) which admits a solution on a domain
different from a ball. We also exhibit a similar example for an
analogous exterior problem.

\subsection{A counterexample in an interior domain}\label{ssect:int2}

\begin{theorem}\label{explicit}
There exist a Lipschitz continuous and strictly
increasing function \linebreak[4]$f:\R\to(0,+\infty)$\nolinebreak[4] and $u\in\mathcal{C}^2(\overline{\Omega})$ solution of 
\begin{equation}\label{eqcontrebis}
\left\{
\begin{array}{cclcc}
-\Delta u&=&f(u)&in&\Omega\\
| \nabla u|&=&8&on&\partial\Omega\\
u&=&0&on&\Gamma,
\end{array}
\right.
\end{equation}
where $\Omega\subset\R^n$ is open, bounded, simply connected, different from a ball, with $\partial\Omega$
globally $\mathcal{C} ^ \infty$, and $\Gamma\subset\partial\Omega$ is nonempty, connected, relatively open and included in a sphere of $\R^n$.
\end{theorem}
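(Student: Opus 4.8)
The plan is to build a radial function $u$ on all of $\R^n$ that simultaneously solves an equation $-\Delta u = f(u)$ and the eikonal equation $|\nabla u| = 8$ outside a ball, and then to carve out $\Omega$ as a suitable sublevel region of $u$. Concretely, I would look for $u = \varphi(r)$ with $r = |x|$, so that $\Delta u = \varphi''(r) + \tfrac{n-1}{r}\varphi'(r)$. On an exterior region $r \ge R_0$ I impose $|\varphi'(r)| = 8$, i.e. $\varphi'(r) = -8$ (taking $\varphi$ decreasing), so there $\varphi(r) = \varphi(R_0) - 8(r-R_0)$ and $-\Delta u = -\varphi'' - \tfrac{n-1}{r}\varphi' = \tfrac{8(n-1)}{r} > 0$; this is automatically a positive quantity, and I will arrange it to be exactly $f(u)$ by a change of variables. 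On the inner region $r \le R_0$ I have freedom to choose $\varphi$ smooth, decreasing, with $\varphi'(R_0) = -8$ and $\varphi''(R_0)$ matched so that $u \in \mathcal C^2$ across $r = R_0$; the equation $-\Delta u = f(u)$ on the inside then \emph{defines} $f$ on the corresponding range of values, once $f$ has been pinned down on the outside range.

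The key point is to make $f$ a single well-defined function $\R \to (0,+\infty)$, Lipschitz and strictly increasing. For that I need $\varphi$ to be \emph{strictly} monotone (so that $r \mapsto \varphi(r)$ is invertible and $f$ is a genuine function of $u$), and I need the quantity $-\Delta u$, expressed as a function of the value $u = \varphi(r)$, to come out strictly increasing in $u$ and Lipschitz. On the exterior part, $-\Delta u = \tfrac{8(n-1)}{r}$ with $r = R_0 + \tfrac{\varphi(R_0)-u}{8}$, which is a strictly increasing, smooth, bounded-derivative function of $u$ on the range $u \le \varphi(R_0)$ — good. On the interior part I have a free smooth decreasing profile $\varphi$ on $[0,R_0]$; I would choose it (for instance a carefully tuned polynomial or a modified quadratic) so that the resulting $-\Delta u$ as a function of $u \in [\varphi(R_0), \varphi(0)]$ continues the exterior branch in a $C^{0,1}$, strictly increasing fashion, with the correct value and (one-sided) slope matching at $u = \varphi(R_0)$. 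One must also respect the radial constraint $\varphi'(0) = 0$ while keeping $-\Delta u$ finite and positive at $r=0$; this is a standard local analysis (the term $\tfrac{n-1}{r}\varphi'$ has a finite limit if $\varphi''(0)$ is finite). Extending $f$ outside the attained range $[\varphi(R_0),\varphi(0)]$ to a global Lipschitz strictly increasing function into $(0,+\infty)$ is then trivial (affine continuation with small positive slope).

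Finally I would produce the domain. Fix a value $t_0$ with $\varphi(R_0) < t_0 < \varphi(0)$, i.e. a level that the function attains strictly inside $r < R_0$; no wait — to get $\Gamma$ inside a sphere and $\partial\Omega$ non-spherical, the construction instead breaks radial symmetry at the last step: one perturbs $u$ on the inner ball, or more simply one takes $\Omega$ to be bounded partly by a piece of the sphere $\{r = R_0\}$ (where automatically $|\nabla u| = 8$) and partly by a non-spherical smooth hypersurface lying in the exterior region $\{r > R_0\}$ on which $u$ has been modified. Since on $\{r > R_0\}$ the function is $\varphi(R_0) - 8(r - R_0)$, its level sets there are spheres; to break symmetry I would compose with a diffeomorphism of the exterior region that is the identity near $\{r = R_0\}$, preserves the eikonal equation (a "distance-function" reparametrization: any $u$ of the form $\psi(\mathrm{dist}(x,\Gamma))$ has $|\nabla u| = |\psi'|$), thereby producing a non-ball $\Omega$ whose boundary consists of a relatively open spherical cap $\Gamma = \partial\Omega \cap \{r = R_0\}$, on which $u = 0$ after normalizing $\varphi(R_0) = 0$, together with a smooth non-spherical part on which $|\nabla u| = 8$; one checks $\Omega$ can be taken simply connected with $\partial\Omega \in C^\infty$ and $\Gamma \subset$ a sphere as required.

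The main obstacle I anticipate is the \emph{simultaneous} fulfilment of all the analytic conditions on $f$ — single-valued, \emph{strictly} increasing, and globally Lipschitz — together with $u \in C^2(\overline\Omega)$: these force delicate $C^2$-matching at $r = R_0$ (matching value, first and second derivatives of $\varphi$, hence matching value and one-sided slope of $f$) while keeping monotonicity of $\varphi$ and positivity of $-\Delta u$ everywhere; and the symmetry-breaking modification must be carried out so as not to destroy $|\nabla u| = 8$ on $\partial\Omega\setminus\Gamma$ nor the $C^2$ regularity. Everything else (the exterior eikonal branch, extending $f$ globally, verifying the geometry of $\Omega$) is routine once the profile $\varphi$ on $[0,R_0]$ has been chosen with enough care.
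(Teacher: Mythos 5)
Your first step --- the radial profile with $\varphi'=-8$ outside a ball, $C^2$-matching at $r=R_0$, and reading off $f$ from the two branches of $-\Delta u$ as a function of $u$ --- is exactly the paper's approach (the paper takes $R_0=1$, $\varphi(r)=(3-r^2)^2-4$ inside and $8(1-r)$ outside, giving the explicit Lipschitz increasing $f$), and that part of your plan, though left as a sketch, is sound and realizable. The genuine gap is in your final step, the construction of $\Omega$. You believe a symmetry-breaking modification of $u$ in the exterior region is needed, and you propose a distance-function reparametrization $u=\psi(\mathrm{dist}(x,\cdot))$ or a diffeomorphism preserving the eikonal equation. This fails: if part of $\partial\Omega$ lies in $\{r>R_0\}$, then $\Omega$ itself contains points of $\{r>R_0\}$, and there the equation $-\Delta u=f(u)$ must still hold. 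For a modified $u=\psi(d)$ with $d$ the distance to a \emph{non-spherical} hypersurface one has $\Delta u=\psi''(d)+\psi'(d)\,\Delta d$, and $\Delta d$ (the mean curvature of the parallel surfaces) is not a function of $d$ alone, so $-\Delta u$ is no longer a function of $u$; your modification preserves $|\nabla u|=8$ but destroys the semilinear equation precisely in the part of $\Omega$ outside the ball.

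The missing idea is that \emph{no modification of $u$ is needed at all}: the boundary piece $\partial\Omega\setminus\Gamma$ is not required to be a level set of $u$; the only condition there is $|\nabla u|=8$, which the unmodified radial $u$ satisfies at \emph{every} point of $\R^n\setminus B$. The paper therefore keeps $u$ radial and breaks symmetry only through the choice of the domain: with $B$ the unit ball, set $\Omega_1=\{x\in B:\ x_1<\tfrac12\}$, $D=\{x\in B:\ x_1=\tfrac12\}$, choose a smooth bounded bump $\Omega_2\subset\{x_1>\tfrac12\}$ with $D\subset\partial\Omega_2$ and $\partial\Omega_2\setminus\overline D\subset\R^n\setminus\overline B$, and take $\Omega=\Omega_1\cup D\cup\Omega_2$ with $\Gamma=\partial\Omega_1\cap\partial\Omega$ (a spherical cap, where $u=0$ and $|\nabla u|=8$), while on $\partial\Omega\setminus\Gamma$ one only has $|\nabla u|=8$ (there $u\neq0$, which \eqref{eqcontrebis} permits). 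Since $-\Delta u=f(u)$ holds on all of $\R^n$, all conditions of the theorem follow for this non-ball, smooth, simply connected $\Omega$. Once you delete the modification step and replace it by this purely geometric gluing, your argument coincides with the paper's; as written, however, the symmetry-breaking step is both unnecessary and incorrect.
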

\begin{proof}
Fix an integer $n\ge2$ and consider the function
$f:\R\to(0,+\infty)$ defined by
$$f(s)=\left\{\begin{array}{ll}
\displaystyle\frac{64(n-1)}{8-s}\quad & \mbox{if }s\le0\\
\ \\
\displaystyle4\Big[(n+2)\sqrt{s+4}-6\Big]\quad & \mbox{if }s\ge0\ .
\end{array}\right.$$
Then, $f$ is globally Lipschitz continuous and strictly increasing over $\R$.\par
Consider also the (radial) function $u$ defined on $\R^n$ by
$$u(x)=\left\{\begin{array}{ll}
(3-|x|^2)^2-4\quad & \mbox{if }|x|\le1\\
8(1-|x|)\quad & \mbox{if }|x|\ge1\ .
\end{array}\right.$$
Then, $u\in\mathcal{C}^2(\R^n)$; to see this, it suffices to write $u=u(r)$ as a function of the real variable $r=|x|$
and to note that
$$u'(r)=\left\{\begin{array}{ll}
-4r(3-r^2)\quad & \mbox{if }r\le1\\
-8\quad & \mbox{if }r\ge1\ ,
\end{array}\right.\qquad\qquad
u''(r)=\left\{\begin{array}{ll}
-12+12r^2\quad & \mbox{if }r\le1\\
0\quad & \mbox{if }r\ge1\ ,
\end{array}\right.$$
are continuous functions in $[0,\infty)$. Moreover, some computations show that $u$ satisfies
$$-\Delta u=f(u)\quad\mbox{in }\R^n\ ,\qquad u=0\quad\mbox{on }\partial B\ ,\qquad|\nabla u|=8
\quad\mbox{in }\R^n\setminus B\ ,$$
where $B$ denotes the unit ball.\par
Let $\Omega_1=\{x\in B;\, x_1<\frac{1}{2}\}$ and $D=\{x\in B;\, x_1=\frac{1}{2}\}$.
Consider a bounded domain $\Omega_2\subset\{x\in\R^n;\, x_1>\frac{1}{2}\}$ such that $D\subset\partial\Omega_2$ and
$(\partial\Omega_2\setminus \overline{D})\subset(\R^n\setminus\overline{B})$. Let
$\Omega=\Omega_1\cup D \cup\Omega_2$ (see Figure 1); for a suitable choice of $\Omega_2$ one has $\partial\Omega\in \mathcal{C}^\infty$.
Let $\Gamma=\partial\Omega_1\cap\partial\Omega$, then $u$ satisfies (\ref{eqcontrebis})
but $\Omega$ is not a ball.
\end{proof}
 \begin{figure}
\begin{center}
\includegraphics[scale = 0.2]{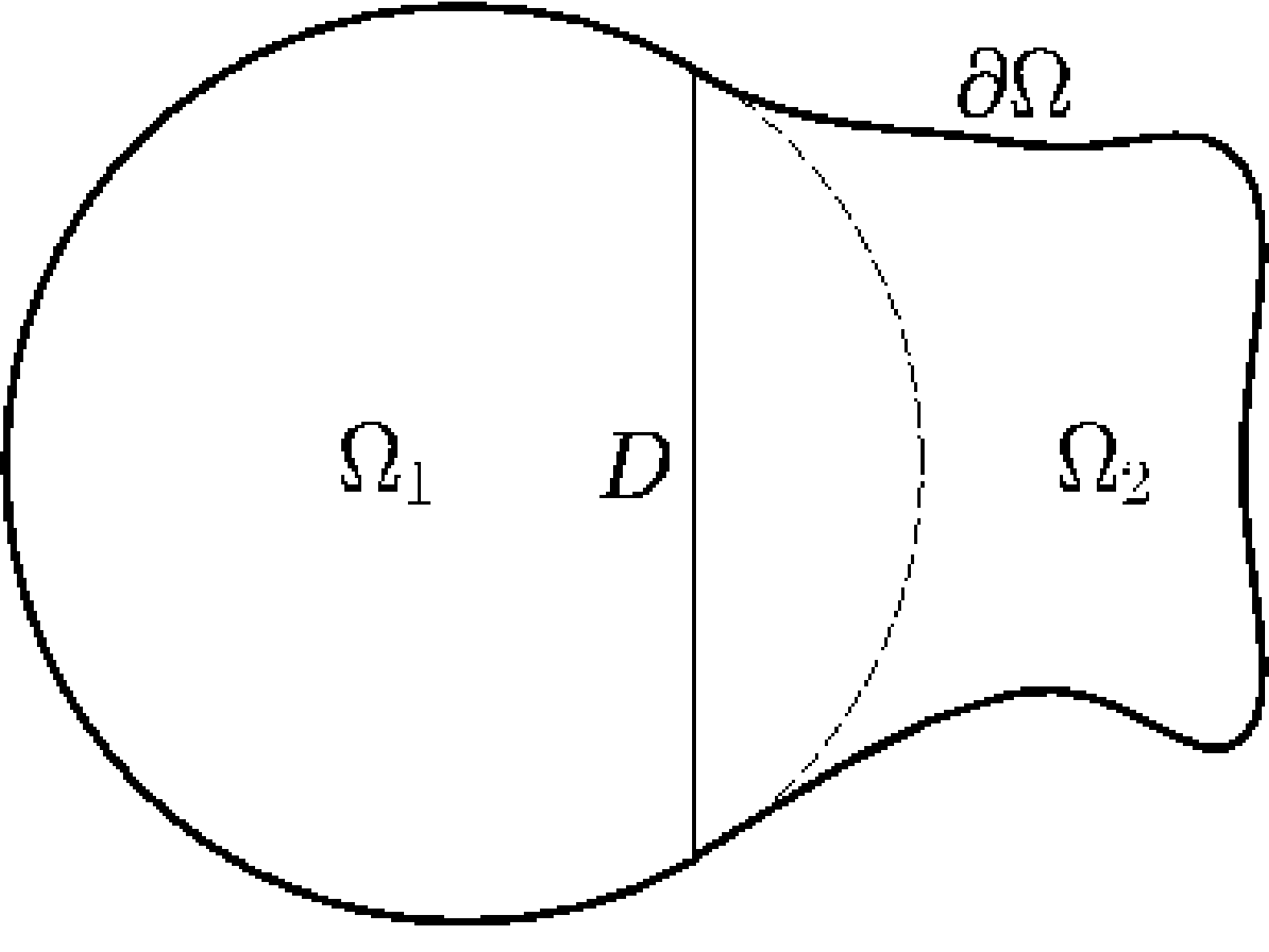}\\
 Figure 1 : domain $\Omega$ in Theorem \ref{explicit}.\\
\end{center}
\end{figure}

Theorem \ref{explicit} should be compared with the following result obtained in \cite{FG}, and similar to Proposition \ref{th:opti}:
\begin{proposition}\label{prop:analytic}
Let $\Omega$ be open and bounded with $\partial\Omega$ connected. Let $\Gamma\subset\partial\Omega$ nonempty and (relatively) open. Assume there exists an open set $\widetilde{\Omega}$ with a connected {\em analytic} boundary containing $\Gamma$.
If there exists a solution $u$ of $(\ref{eqcontrebis})$ with $f$ analytic {\bf and nonincreasing},
then $\Omega=\widetilde{\Omega}$, $\Omega$ is a ball, and $u$ is radially symmetric.
\end{proposition}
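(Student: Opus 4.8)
The plan is to turn the partially overdetermined problem $(\ref{eqcontrebis})$ on $\Omega$ into a \emph{totally} overdetermined one on $\widetilde\Omega$, to which Serrin's theorem applies, and then to come back from $\widetilde\Omega$ to $\Omega$. First I would invoke elliptic regularity: since $f$ is analytic, $u$ is real-analytic in $\Omega$, and — using that $\Gamma\subset\partial\widetilde\Omega$ is analytic and that on $\Gamma$ one has $u=0$ (so the tangential gradient vanishes and $|u_\nu|=|\nabla u|=c>0$, whence $\nabla u\neq0$ on $\Gamma$) — $u$ is analytic up to $\Gamma$, $\partial\Omega$ is analytic near $\Gamma$ and coincides there with $\partial\widetilde\Omega$ (two analytic hypersurfaces, one a relatively open piece of $\partial\Omega$ sitting inside the other), and $u$ carries the analytic Cauchy data $(0,-c)$ (after normalising the sign of $u$ near $\Gamma$) on the noncharacteristic analytic surface $\Gamma$. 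By the Cauchy--Kovalevskaya theorem for the analytic equation $-\Delta u=f(u)$, together with Holmgren uniqueness, $u$ then extends to an analytic solution $\hat u$ of $-\Delta\hat u=f(\hat u)$ in a (connected) neighbourhood $\mathcal N$ of $\Gamma$ meeting both sides of $\Gamma$, with $\{\hat u=0\}\cap\mathcal N=\partial\widetilde\Omega\cap\mathcal N$.

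Next I would propagate this along $\partial\widetilde\Omega$. Let $Z$ be the set of points $p\in\partial\widetilde\Omega$ near which $\hat u$ extends analytically as a solution of $-\Delta\hat u=f(\hat u)$ with $\hat u=0$ and $|\nabla\hat u|=c$ on $\partial\widetilde\Omega$. By the previous step $\Gamma\subset Z$, and $Z$ is clearly open in $\partial\widetilde\Omega$. It is also closed: near points of $Z$ the Cauchy data of $\hat u$ on the analytic surface $\partial\widetilde\Omega$ are the \emph{constants} $(0,-c)$, so Cauchy--Kovalevskaya — with a radius of convergence bounded below, by compactness of $\partial\widetilde\Omega$ — extends $\hat u$ by a uniform amount, which carries $Z$ over to its limit points; and near $\partial\widetilde\Omega$ the solution $\hat u$ vanishes on $\partial\widetilde\Omega$ with a fixed gradient there, so it cannot blow up as one approaches $\partial\widetilde\Omega$. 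Since $\partial\widetilde\Omega$ is connected, $Z=\partial\widetilde\Omega$, i.e. $\hat u$ is analytic in a neighbourhood of $\partial\widetilde\Omega$ with $\hat u=0$ and $|\nabla\hat u|=c$ on all of $\partial\widetilde\Omega$. Continuing $\hat u$ inward — and here I would use that, $f$ being nonincreasing, the Dirichlet problem $-\Delta v=f(v)$ in $\widetilde\Omega$, $v=0$ on $\partial\widetilde\Omega$, has at most one solution, with which $\hat u$ must coincide, so that $\hat u$ stays bounded — one gets $\hat u\in\mathcal{C}^\infty(\widetilde\Omega)$ solving on $\widetilde\Omega$ the totally overdetermined problem $-\Delta\hat u=f(\hat u)$, $\hat u=0$, $|\nabla\hat u|=c$ on $\partial\widetilde\Omega$. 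Serrin's theorem then forces $\widetilde\Omega$ to be a ball $B(x_0,r_0)$ and $\hat u(x)=U(|x-x_0|)$ to be radial, with $|U'(r_0)|=c$.

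To conclude $\Omega=\widetilde\Omega$ I would argue as follows. Since $u$ and $\hat u$ agree on the open set $\mathcal N\cap\Omega$, $u$ is analytic on the connected set $\Omega$, and $x\mapsto U(|x-x_0|)$ is analytic on a neighbourhood of $\overline\Omega$ (the radial profile $U$ cannot blow up before a radius with $\overline\Omega$ contained in the corresponding ball, otherwise $u=\hat u=U(|x-x_0|)$ would be unbounded on $\overline\Omega$), the identity principle gives $u=U(|x-x_0|)$ on all of $\Omega$. Hence the overdetermined condition $|\nabla u|=c$ on $\partial\Omega$ reads $|U'(|x-x_0|)|=c$ for every $x\in\partial\Omega$. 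Here the monotonicity of $f$ is essential: $\{r>0:|U'(r)|=c\}$ is the zero set of the analytic function $U'(r)^2-c^2$, hence discrete unless $U'\equiv\pm c$ on an interval; but $U'\equiv\pm c$ on an interval makes $U$ affine in $r$ there and, substituting into $-\Delta U=f(U)$, forces $f(s)=\frac{(n-1)c^2}{d-s}$ on an interval — a \emph{strictly increasing} function of $s$ — contradicting $f$ analytic and nonincreasing. Therefore $\{x:|U'(|x-x_0|)|=c\}$ is a locally finite union of spheres centred at $x_0$; being connected, $\partial\Omega$ is one such sphere, and since it meets $\partial\widetilde\Omega$ along $\Gamma\neq\emptyset$ (two concentric spheres with a point in common), it equals $\partial\widetilde\Omega$. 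Thus $\Omega=\widetilde\Omega=B(x_0,r_0)$ and $u=U(|x-x_0|)$ is radially symmetric.

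The step I expect to be the main obstacle is the second one, namely the global analytic continuation of $\hat u$ from a neighbourhood of $\Gamma$ to a neighbourhood of $\overline{\widetilde\Omega}$, and especially the control of $\hat u$ when it is continued into the interior of $\widetilde\Omega$: this is where structural assumptions on $f$ that go beyond what Serrin's theorem alone requires genuinely enter (the monotonicity, through uniqueness of the Dirichlet problem on $\widetilde\Omega$). Consistently, this is also exactly the hypothesis that fails in Theorem~\ref{explicit}: there $f$ is strictly increasing and, for $s\le0$, equals $\frac{64(n-1)}{8-s}=\frac{(n-1)c^2}{c-s}$, precisely the ``forbidden'' profile isolated above, so that the radially affine branch $u=8(1-|x|)$ can be cut by an essentially arbitrary smooth free boundary and $\Omega$ need not be a ball.
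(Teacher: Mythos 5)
The paper does not actually prove Proposition \ref{prop:analytic} (it is quoted from \cite{FG}), so your argument has to stand on its own; and while your Steps 1 and 3 are essentially sound, there is a genuine gap at the decisive point of Step 2, namely the passage from the collar solution to a solution of the \emph{totally} overdetermined problem on all of $\widetilde\Omega$. The justification you offer — uniqueness of the Dirichlet problem for nonincreasing $f$, ``with which $\hat u$ must coincide'' — is a non sequitur: uniqueness compares two solutions defined on \emph{all} of $\widetilde\Omega$ with the same boundary values, whereas your $\hat u$ is defined only on a collar of $\partial\widetilde\Omega$ and shares with the Dirichlet solution $v$ only the Dirichlet datum, not the Neumann datum. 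There is no reason for $\hat u$ to coincide with $v$ near $\partial\widetilde\Omega$, nor for it to continue analytically inward (boundedness is not the only obstruction to continuation of solutions of an elliptic PDE). Concretely, take $f\equiv 0$ (analytic and nonincreasing) and $\widetilde\Omega$ the unit disk: the unique solution near $\partial\widetilde\Omega$ with Cauchy data $(0,-c)$ is $\hat u=-c\log|x|$, which neither coincides with $v\equiv 0$ nor extends harmonically inside.

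This gap is fatal to the route as written, because after Step 1 you never again use the hypothesis that $u$ exists on all of $\Omega$ with $|\nabla u|=c$ on all of $\partial\Omega$: the collar solution with constant Cauchy data $(0,-c)$ exists for \emph{every} compact connected analytic hypersurface by Cauchy--Kovalevskaya alone, so if Steps 2--3 were correct they would prove that every bounded domain with connected analytic boundary is a ball (take $f\equiv 0$ and $\widetilde\Omega$ an ellipse). The missing idea is precisely how the global eikonal condition on $\partial\Omega$, together with the monotonicity of $f$, prevents $\partial\Omega$ from detaching from $\partial\widetilde\Omega$; this is where the real content of the proposition lies, and it is exactly the mechanism of Theorem \ref{explicit} that you correctly isolate at the end (the profile $f(s)=\frac{(n-1)c^2}{c-s}$ along an affine radial branch). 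Note also that your final appeal to monotonicity (excluding $U'\equiv\pm c$) is not where it is genuinely needed: since the radial profile solves an ODE with analytic right-hand side and $U'(0)=0$, the set $\{r:|U'(r)|=c\}$ is automatically discrete; monotonicity must act earlier, in the step you left open, and for the actual argument you should consult \cite{FG}.
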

 Note in particular that: the overdetermined part $\Gamma$ in Theorem \ref{explicit} satisfy the hypothesis in Proposition \ref{prop:analytic} (analytically continuable according to the definition in \cite[Section 3.1]{FG}), but $f$ is neither analytic, nor nonincreasing.\\
 
Similarly, Theorem \ref{explicit} should also be compared with the statements (b) in Theorems 3 and 7 in \cite{FG} which gives more various sufficient conditions to obtain symmetry in overdetermined problems of type \eqref{eqcontrebis}. Again, Theorem \ref{explicit} provides an example where all these hypothesis are satisfied, except the fact that $f$ be nonincreasing.

\subsection{A counterexample in an exterior domain}\label{ssect:ext2}

\begin{theorem}\label{exterior}
There exist a Lipschitz continuous function $f: \R \to \R$, and $u\in\mathcal{C}^2(\R^n\setminus\Omega)$ solution of
\begin{equation}\label{extneumann}
\left\{\begin{array}{ll}
-\Delta u=f(u)\quad & \mbox{in }\R^n\setminus\overline{\Omega}\\
|\nabla u|=\frac{1}{2}\quad & \mbox{on }\partial\Omega\\
u=1\quad & \mbox{on }\Gamma\\
u\to0,\ |\nabla u|\to0\quad & \mbox{as }|x|\to\infty,
\end{array}\right.
\end{equation}
where $\Omega\subset\R^n$ is open, bounded, simply connected, different from a ball, with $\partial\Omega$ globally $\mathcal{C} ^ \infty$,
and $\Gamma\subset\partial\Omega$ is nonempty, connected, relatively open and included in a sphere.
\end{theorem}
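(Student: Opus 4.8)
The plan is to mimic the explicit construction from Theorem \ref{explicit}, but now building a radial profile that solves the eikonal equation $|\nabla u| = \frac12$ on the \emph{exterior} of a fixed ball $B$, decays to zero at infinity, equals $1$ on $\partial B$, and solves some semilinear equation $-\Delta u = f(u)$ on all of $\R^n \setminus \{0\}$ (or on $\R^n$). First I would look for a radial function $u = u(r)$, $r = |x|$, with $u(1) = 1$ and $u'(r) = -\frac12$ for $r \ge 1$, i.e.\ $u(r) = \frac12(3 - r)$ for $1 \le r \le 3$ — but this hits zero at $r = 3$ and would then have to be continued as $u \equiv 0$, which is fine for an exterior problem except that we also want $u \to 0$ only \emph{at infinity}. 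So instead I would take $u(r) = 1/r^{\alpha}$ type decay for $r$ large, glued to a piece with $|u'| = \frac12$ near $r = 1$; more flexibly, choose any smooth decreasing profile $g:[1,\infty)\to(0,1]$ with $g(1)=1$, $g'(1) = -\frac12$, $g,g' \to 0$ at infinity, and then \emph{define} $f$ on the range of $g$ by $f(g(r)) := -g''(r) - \frac{n-1}{r} g'(r)$, which is legitimate and gives a function of $u$ alone precisely because $g$ is strictly monotone, hence invertible. Lipschitz continuity of $f$ on $(0,1]$ follows by controlling $g'', g'/r$ as functions of $g$; and on the complement (values $\le 0$ and $> 1$) one extends $f$ arbitrarily in a Lipschitz way.

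Next, having the radial pair $(u,f)$ defined on $\{|x| \ge 1\}$ with $-\Delta u = f(u)$ there, $|\nabla u| = \frac12$ on all of $\{|x|\ge 1\}$ (if I keep $|g'| = \frac12$ identically, which forces the decay-at-infinity to be handled by letting the sphere on which $u=1$ sit at a large radius and cutting off — alternatively I relax $|\nabla u| = \frac12$ to hold only \emph{on} $\partial\Omega$, which is all that \eqref{extneumann} requires), I would then cut the ball with a hyperplane exactly as before. Set $B$ to be the relevant sphere, let $\Omega_1 = \{x \in B : x_1 < \frac12\}$, $D = \{x\in B : x_1 = \frac12\}$, and choose a bounded domain $\Omega_2 \subset \{x_1 > \frac12\}$ with $D \subset \partial\Omega_2$ and $\partial\Omega_2 \setminus \overline D \subset \R^n \setminus \overline B$, so that $\Omega := \Omega_1 \cup D \cup \Omega_2$ is a bounded, simply connected, non-spherical domain with smooth boundary for a suitable choice of $\Omega_2$. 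Then on $\R^n \setminus \overline\Omega$ the function $u$ (restricted from the exterior radial solution, with the convention $u=0$ where we are outside the original ball and where the profile has died, if relevant) solves $-\Delta u = f(u)$; on the part $\Gamma := \partial\Omega_1 \cap \partial\Omega$, which is a piece of the sphere $\partial B$, we have $u = 1$; on all of $\partial\Omega$ we have $|\nabla u| = \frac12$ (since every boundary point of $\Omega$ lies at radius $\ge 1$ in the region where $|\nabla u| \equiv \frac12$); and $u \to 0$, $|\nabla u|\to 0$ at infinity provided the profile is chosen to decay — this is the one place where I must be slightly careful, and it forces me to use a genuinely decaying $g$ rather than the piecewise-linear profile of Theorem \ref{explicit}.

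The main obstacle, then, is reconciling three competing demands on the single radial profile $g$ on $[1,\infty)$: (i) $g' (1) = -\frac12$ exactly, and more generally $|g'| = \frac12$ on a whole neighborhood of $r = 1$ large enough to contain the ``outer bulge'' $\partial\Omega_2\setminus\overline D$ where $|\nabla u| = \frac12$ is needed pointwise; (ii) $g, g' \to 0$ at infinity; and (iii) $f$ defined via $f(g(r)) = -g''(r) - \frac{n-1}{r}g'(r)$ comes out \emph{Lipschitz} as a function of the variable $g$. Demand (i) and the linear piece $g(r) = \frac12(3-r)$ are incompatible with (ii) unless we transition, after $r$ exceeds the radius of the outer bulge, to a decaying tail; across that transition $g''$ jumps from $0$ to something nonzero, which only costs a jump in the \emph{slope} of $f$, i.e.\ Lipschitz is preserved, but one must check the transition can be made $C^2$ in $r$ (so that $u \in C^2(\R^n\setminus\Omega)$) while keeping $r \mapsto g(r)$ strictly monotone so that $f$ is well-defined; a convenient choice is to interpolate $g'$ smoothly from $-\frac12$ down to a tail like $g(r) \sim r^{-(n-2)}$ (harmonic tail, giving $f \equiv 0$ for small positive values of $u$) or $g(r) \sim e^{-r}$, and then verify by direct estimate that $|df/dg|$ stays bounded. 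I expect this verification — exhibiting one explicit $g$ and checking Lipschitz-ness of the resulting $f$ near $u = 0^+$ and at the junction — to be the only real computation, and to go through along the lines of the $|x|\le 1$ computation already carried out in the proof of Theorem \ref{explicit}.
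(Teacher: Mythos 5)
Your plan is workable, but it is genuinely different from the paper's construction, and the difference matters for how much you still have to compute. You keep the geometry of Theorem \ref{explicit} (a ball with an \emph{outward} bulge, $\Omega\supsetneq B$), so the eikonal condition $|\nabla u|=\frac12$ must hold on an annulus $\{1\le |x|\le R\}$ \emph{outside} the sphere large enough to contain the bulge; this forces a three-piece radial profile (linear on $[1,R]$, a $C^2$ transition, then a decaying tail) with $f$ only defined implicitly through $f(g(r))=-g''-\frac{n-1}{r}g'$, and the Lipschitz bound for $f$ near the junctions and as $s\to0^+$ is left as the ``one real computation''. The paper instead flips the picture: it keeps the linear eikonal piece $u=\frac{3-|x|}{2}$ \emph{inside} the unit ball and dents $\Omega$ \emph{inward} ($\Omega\subsetneq B$, $0\in\Omega$, $\Gamma=\partial\Omega\cap\partial B$, the non-spherical part of $\partial\Omega$ strictly inside $B$), so the decaying branch can start right at $r=1$ and be written in closed form, $u=\frac{3}{2|x|}-\frac{1}{2|x|^2}$, which matches the linear piece to second order at $r=1$; both $u$ and $f$ are then explicit and the whole verification of \eqref{extneumann} is a short direct computation. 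What your route buys is flexibility (any monotone tail works, e.g.\ harmonic for $n\ge3$ or $c/r$, keeping $g'$ bounded away from zero on the compact transition so that $g^{-1}$ is Lipschitz); what it costs is that you still owe the explicit glue. Two caveats on your sketch: (i) the remark that a jump of $g''$ at the junction ``only costs a jump in the slope of $f$'' is wrong as stated --- a discontinuity of $g''$ makes $f$ itself discontinuous (and $u\notin C^2$); you need $g''$ continuous across the junction, pushing the jump to $g'''$, which is exactly the $C^2$ gluing you then impose, so the fix is already in your text but the justification should be stated that way; (ii) discard the detours in your first two paragraphs (the profile hitting zero at $r=3$, ``$u=0$ where the profile has died'', moving the sphere to a large radius): in the final construction $g$ must stay strictly positive and strictly decreasing on all of $[1,\infty)$, since injectivity of $g$ is what makes $f$ well defined and positivity is what lets $u\to0$ only at infinity.
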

\begin{proof}
  Fix an integer $n\ge2$ and consider the function
$f:\R\to\R$ defined by
$$f(s)=\left\{\begin{array}{ll}
\displaystyle\frac{n-1}{2(3-2s)}\quad & \displaystyle\mbox{if }1\le s<\frac{3}{2}\\
\ \\
\displaystyle\frac{3(n-3)}{16}\, (3-\sqrt{9-8s})^3-\frac{n-4}{16}\, (3-\sqrt{9-8s})^4\quad & \mbox{if }0<s\le1\ .
\end{array}\right.$$
Then, $f$ is globally Lipschitz continuous over $(0,\frac{3}{2})$; moreover, if $n\ge4$ then $f$ is positive and
strictly increasing.\par
Consider also the (radial) function $u$ defined on $\R^n\setminus\{0\}$ by
$$u(x)=\left\{\begin{array}{ll}
\displaystyle\frac{3-|x|}{2}\quad & \mbox{if }|x|\le1\\
\ \\
\displaystyle\frac{3}{2|x|}-\frac{1}{2|x|^2}\quad & \mbox{if }|x|\ge1\ .
\end{array}\right.$$
Then, $u\in\mathcal{C}^2(\R^n\setminus\{0\})$; to see this, it suffices to write $u=u(r)$ as a function of the real variable $r=|x|$
and to note that
$$u'(r)=\left\{\begin{array}{ll}
-\frac{1}{2}\quad & \mbox{if }0<r\le1\\
-\frac{3}{2r^2}+\frac{1}{r^3}\quad & \mbox{if }r\ge1\ ,
\end{array}\right.\qquad\qquad
u''(r)=\left\{\begin{array}{ll}
0\quad & \mbox{if }0<r\le1\\
\frac{3}{r^3}-\frac{3}{r^4}\quad & \mbox{if }r\ge1\ ,
\end{array}\right.$$
are continuous functions in $(0,\infty)$. Moreover, some computations show that $u$ satisfies
$$-\Delta u=f(u)\quad\mbox{in }\R^n\setminus\{0\}\ ,\qquad u=1\quad\mbox{on }\partial B\ ,\qquad|\nabla u|=\frac{1}{2}
\quad\mbox{in }\overline{B}\setminus\{0\}\ ,$$
where $B$ denotes the unit ball.
Take any smooth domain $\Omega\subsetneq B$ such that $0\in\Omega$ and $\{x\in\partial B;\, x_1<\frac{1}{2}\}\subset\partial\Omega$ (see figure 2).
Let $\Gamma=\partial\Omega\cap\partial B$, then $u$ satisfies (\ref{extneumann}) but $\Omega$ is not a ball.
\end{proof}
 \begin{figure}[!ht]
\begin{center}
\includegraphics[scale = 0.6]{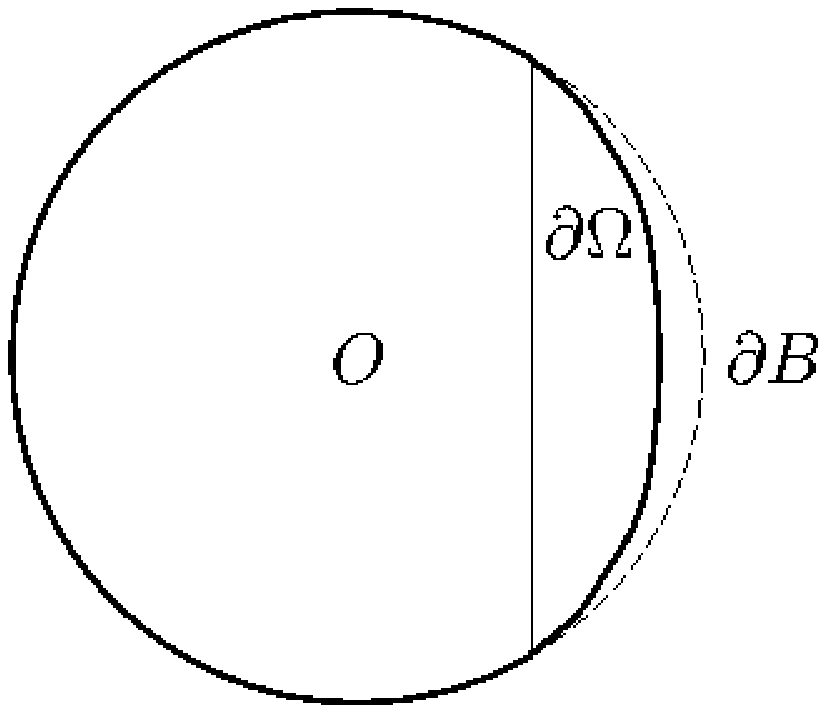}\\
 Figure 2 : domain $\Omega$ in Theorem \ref{exterior}.\\
\end{center}
\end{figure}
\begin{remark}
 Again, Theorem \ref{exterior} should be compared with the results in \cite{FG}, similarly to what we did for Theorem \ref{explicit}.
\end{remark}

\bibliographystyle{plain}

\bigskip
\noindent
\parbox[t]{.48\textwidth}{
Ilaria Fragal\`a\\
Dipartimento di Matematica\\
Politecnico di Milano \\
Piazza Leonardo da Vinci 32,\\
20133 Milano, Italy\\
ilaria.fragala@polimi.it } \hfill
\parbox[t]{.48\textwidth}{
Filippo Gazzola\\
Dipartimento di Matematica\\
Politecnico di Milano \\
Piazza Leonardo da Vinci 32,\\
20133 Milano, Italy\\
filippo.gazzola@polimi.it 
}\\
\vspace{.5cm}\\
\parbox[t]{.48\textwidth}{
Jimmy Lamboley\\
ENS Cachan Bretagne\\
IRMAR, UEB, \\
Campus de Ker Lann,\\
35170 Bruz, France\\
jimmy.lamboley@bretagne.ens-cachan.fr} \hfill
\parbox[t]{.48\textwidth}{
Michel Pierre\\
ENS Cachan Bretagne\\
IRMAR, UEB, \\
Campus de Ker Lann,\\
35170 Bruz, France\\
michel.pierre@bretagne.ens-cachan.fr 
}
\end{document}